\documentclass{amsart}
\usepackage{setspace}
\usepackage{a4}
\usepackage{amsthm}
\usepackage{latexsym}
\usepackage{amsfonts}
\usepackage{graphicx}
\usepackage{textcomp}
\usepackage{cite}
\usepackage{enumerate}
\usepackage{amssymb}
\usepackage{hyperref}
\usepackage{amsmath}
\usepackage{tikz}
\usepackage[mathscr]{euscript}
\usepackage{mathtools}
\newtheorem{theorem}{Theorem}[section]

\title{This is the title}
\usepackage{fancyhdr}

\begin{document}
\hrule\hrule\hrule\hrule\hrule
\vspace{0.3cm}	
\begin{center}
{\bf\large{{Non-Archimedean Massera-Schaffer-Maligranda-Pecaric-Rajic Inequality}}}\\
\vspace{0.3cm}
\hrule\hrule\hrule\hrule\hrule
\vspace{0.3cm}
\textbf{K. Mahesh Krishna}\\
School of Mathematics and Natural Sciences\\
Chanakya University Global Campus\\
NH-648, Haraluru Village\\
Devanahalli Taluk, 	Bengaluru  North District\\
Karnataka State, 562 110, India\\
Email: kmaheshak@gmail.com\\

Date: \today
\end{center}

\hrule\hrule
\vspace{0.5cm}
\textbf{Abstract}: Massera and Schaffer [\textit{Ann. Math. (2), 1958}] derived a breakthrough upper bound for the Clarkson angle between two nonzero vectors in a normed linear space, which was later improved by Maligranda [\textit{Am. Math. Mon., 2006}]. Pecaric and Rajic [\textit{Math. Inequal. Appl., 2007}] extended Maligranda's inequality to finitely many nonzero vectors. We derive a non-Archimedean version of  Massera-Schaffer-Maligranda-Pecaric-Rajic inequality.
 \\
\textbf{Keywords}:  Normed linear space, Triangle inequality, Non-Archimedean linear space, Ultra-norm.\\
\textbf{Mathematics Subject Classification (2020)}:  12J25, 46S10.\\

\hrule

\hrule
\section{Introduction}

Let $\mathcal{X}$ be a normed linear space (NLS). Recall that for $x, y \in \mathcal{X}\setminus\{0\}$, the \textbf{Clarkson angle} \cite{CLARKSON} between $x$ and $y$ is defined as 
\begin{align*}
	\alpha[x,y]\coloneqq \left\|\frac{x}{\|x\|}-\frac{y}{\|y\|}\right\|.
\end{align*}
Important Massera-Schaffer inequality \cite{MASSERASCHAFFER} says that 
\begin{align}\label{MS}
	\alpha[x,y]=\left\|\frac{x}{\|x\|}-\frac{y}{\|y\|}\right\|\leq \frac{2\|x-y\|}{\max\{\|x\|, \|y\|\}}, \quad \forall x, y \in \mathcal{X}\setminus\{0\}.
\end{align}
Maligranda improved Inequality (\ref{MS}) in 2006 \cite{MALIGRANDA}. 
\begin{theorem} \cite{MALIGRANDA, MERCER} \label{DWMT} (\textbf{Massera-Schaffer-Maligranda Inequality})
Let $\mathcal{X}$ be a NLS. Then for all $x, y \in \mathcal{X}\setminus\{0\}$, 
\begin{align}\label{MI}
	\alpha[x,y]=\left\|\frac{x}{\|x\|}-\frac{y}{\|y\|}\right\|\leq \frac{\|x-y\|+|\|x\|-\|y\||}{\max\{\|x\|, \|y\|\}}\leq \frac{2\|x-y\|}{\max\{\|x\|, \|y\|\}}.	
\end{align}
\end{theorem}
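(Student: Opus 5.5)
By the symmetry of both sides in $x$ and $y$, I would assume without loss of generality that $\|x\|\geq \|y\|>0$. Then $\max\{\|x\|,\|y\|\}=\|x\|$, and it suffices to prove
\begin{align*}
\|x\|\,\alpha[x,y]=\left\|x-\frac{\|x\|}{\|y\|}y\right\|\leq \|x-y\|+\bigl|\|x\|-\|y\|\bigr|,
\end{align*}
where I have multiplied through by $\|x\|$ and used homogeneity of the norm.

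The key step is to insert $y$ and apply the triangle inequality once:
\begin{align*}
\left\|x-\frac{\|x\|}{\|y\|}y\right\|\leq \|x-y\|+\left\|y-\frac{\|x\|}{\|y\|}y\right\| = \|x-y\|+\left|1-\frac{\|x\|}{\|y\|}\right|\|y\|=\|x-y\|+\bigl|\|y\|-\|x\|\bigr|.
\end{align*}
Dividing by $\|x\|=\max\{\|x\|,\|y\|\}$ gives the first inequality in (\ref{MI}). The only point that needs care is the choice of normalisation. One must scale $y$ up to norm $\|x\|$; scaling $x$ down would leave a factor $\|y\|$ in the denominator, which is worse. The WLOG assumption is what makes this choice correct.

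The second inequality in (\ref{MI}) follows from the reverse triangle inequality $\bigl|\|x\|-\|y\|\bigr|\leq \|x-y\|$. Substituting it into the numerator gives $\|x-y\|+\bigl|\|x\|-\|y\|\bigr|\leq 2\|x-y\|$, and dividing by $\max\{\|x\|,\|y\|\}$ completes the proof. I do not expect any step to be a real obstacle.
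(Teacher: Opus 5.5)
Your proof is correct and complete. Multiplying by $\|x\|$, splitting $x-\frac{\|x\|}{\|y\|}y=(x-y)+\bigl(1-\frac{\|x\|}{\|y\|}\bigr)y$, applying one triangle inequality, and finishing with the reverse triangle inequality is the standard argument for this theorem; the paper quotes the theorem from Maligranda and Mercer without proof. It is also exactly the decomposition the paper uses to prove its non-Archimedean Massera--Schaffer inequality, with two differences: the ultra-norm inequality replaces the triangle inequality, and both normalizations are bounded and the minimum taken instead of your WLOG (in your setting the two bounds share the numerator $\|x-y\|+\bigl|\|x\|-\|y\|\bigr|$, so normalizing by the larger norm is optimal, whereas in the non-Archimedean setting the numerators generally differ).
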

We note that,  in 1964 Dunkl and Williams \cite{DUNKLWILLIAMS} independently showed that 
\begin{align}\label{DW}
\left\|\frac{x}{\|x\|}-\frac{y}{\|y\|}\right\|\leq \frac{4\|x-y\|}{\|x\|+\|y\|}, \quad \forall x, y \in \mathcal{X}\setminus\{0\}.	
\end{align}
Note that 
\begin{align*}
	\frac{2\|x-y\|}{\max\{\|x\|, \|y\|\}}\leq \frac{4\|x-y\|}{\|x\|+\|y\|}, \quad \forall x, y \in \mathcal{X}\setminus\{0\}.	
\end{align*}
Therefore Inequality (\ref{MS}) improves Inequality (\ref{DW}). 
Inequality (\ref{DW}) is famously known as Dunkl-Williams inequality in the literature. In 2007, Pecaric and Rajic extended Inequality (\ref{MI}) to finitely many nonzero elements \cite{PECARICRAJIC}. 
\begin{theorem} \cite{PECARICRAJIC} (\textbf{Massera-Schaffer-Maligranda-Pecaric-Rajic Inequality}) \label{DWMPRT}
Let $\mathcal{X}$ be a NLS and $n \in \mathbb{N}$. Then for all $x_1, \dots, x_n \in \mathcal{X}\setminus\{0\}$, 
\begin{align*}
\left\|\sum_{j=1}^{n}\frac{x_j}{\|x_j\|}\right\|\leq \min_{1\leq k \leq n}\left\{\frac{1}{\|x_k\|}\left(\left\|\sum_{j=1}^{n}x_j\right\|+\sum_{j=1}^{n}\big|\|x_j\|-\|x_k\|\big|\right)\right\}.
\end{align*}
\end{theorem}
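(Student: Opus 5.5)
Fix an index $k\in\{1,\dots,n\}$. It suffices to show that
\begin{align*}
\|x_k\|\left\|\sum_{j=1}^{n}\frac{x_j}{\|x_j\|}\right\|\leq \left\|\sum_{j=1}^{n}x_j\right\|+\sum_{j=1}^{n}\big|\|x_j\|-\|x_k\|\big|,
\end{align*}
because dividing by $\|x_k\|>0$ and then taking the minimum over $k$ gives the statement of Theorem \ref{DWMPRT}. So the whole argument is a single add-and-subtract step followed by the triangle inequality, mirroring the standard proof of Theorem \ref{DWMT}.

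The key identity is
\begin{align*}
\|x_k\|\sum_{j=1}^{n}\frac{x_j}{\|x_j\|}=\sum_{j=1}^{n}x_j+\sum_{j=1}^{n}\left(\|x_k\|-\|x_j\|\right)\frac{x_j}{\|x_j\|}.
\end{align*}
It holds because each summand on the right recombines as $x_j+(\|x_k\|-\|x_j\|)\,x_j/\|x_j\|=\|x_k\|\,x_j/\|x_j\|$. Taking norms and applying the triangle inequality to the $n+1$ vectors $\sum_j x_j$ and $(\|x_k\|-\|x_j\|)x_j/\|x_j\|$, $j=1,\dots,n$, gives the bound. Here homogeneity is used: $\left\|(\|x_k\|-\|x_j\|)\,x_j/\|x_j\|\right\|=\big|\|x_j\|-\|x_k\|\big|$, since $x_j/\|x_j\|$ has norm one. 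The $j=k$ term vanishes, as it should.

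There is no serious obstacle. The only real step is choosing the right decomposition: scale the normalized sum by $\|x_k\|$ rather than rescaling each $x_j$ individually. As a sanity check, taking $n=2$ and replacing $x_2$ by $-y$ gives $\|x\|\,\alpha[x,y]\le \|x-y\|+|\|x\|-\|y\||$, and choosing $k$ to be the index of the larger norm recovers the first inequality of Theorem \ref{DWMT}. This confirms that the choice of decomposition is the right one.
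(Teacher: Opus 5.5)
Your proof is correct. The paper only cites this result from \cite{PECARICRAJIC} and gives no proof, but its proof of the non-Archimedean analogue, Theorem \ref{NAMSMPRT}, uses exactly your decomposition, written there as $\sum_{j=1}^{n}\frac{x_j}{\|x_j\|}=\frac{1}{\|x_k\|}\sum_{j=1}^{n}x_j-\sum_{j=1}^{n}\bigl(\frac{1}{\|x_k\|}-\frac{1}{\|x_j\|}\bigr)x_j$ (your identity divided by $\|x_k\|$), with the ultra-norm inequality producing a maximum where your triangle inequality produces the sum in the statement.
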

It  is important to ask what are non-Archimedean versions of Theorems \ref{DWMT} and \ref{DWMPRT}? We answer the question by deriving non-Archimedean Massera-Schaffer-Maligranda-Pecaric-Rajic Inequality (Theorem \ref{NAMSMPRT}).

\section{Non-Archimedean Massera-Schaffer-Maligranda-Pecaric-Rajic Inequality}

Let $\mathbb{K}$ be a field. A map $|\cdot|: \mathbb{K} \to [0, \infty)$ is said to be a non-Archimedean  valuation  if following conditions holds.
\begin{enumerate}[\upshape(i)]
	\item If $\lambda  \in \mathbb{K}$ is such that $|\lambda|=0$, then $\lambda=0$.
	\item $|\lambda \mu|=|\lambda||\mu|$ for all $\lambda, \mu  \in \mathbb{K}$.
	\item (Ultra-triangle inequality) $|\lambda+\mu|\leq \max\{|\lambda|, |\mu|\}$ for all $\lambda, \mu \in \mathbb{K}$.
\end{enumerate}
In this case, $\mathbb{K}$ is called as non-Archimedean valued field \cite{SCHIKHOF}.  Let $\mathcal{X}$ be a vector space over a non-Archimedean valued field $\mathbb{K}$ with valuation $|\cdot|$. A map $\|\cdot\|: \mathcal{X} \to [0, \infty)$ is said to be a non-Archimedean norm if following conditions holds.
\begin{enumerate}[\upshape(i)]
	\item If $x \in \mathcal{X}$ is such that $\|x\|=0$, then $x=0$.
	\item $\|\lambda x\|=|\lambda|\|x\|$ for all $\lambda \in \mathbb{K}$, for all $x \in \mathcal{X}$.
	\item (Ultra-norm inequality) $\|x+y\|\leq \max\{\|x\|, \|y\|\}$ for all $x,y \in \mathcal{X}$.
\end{enumerate}
In this case, $\mathcal{X}$ is called as non-Archimedean linear space (NALS) \cite{GARCIASCHIKHOF}.   Let $\mathcal{X}$ be a NALS over $\mathbb{K}$. Let  $x, y \in \mathcal{X}\setminus\{0\}$ with $\|x\|, \|y\| \in \mathbb{K}$. We define the \textbf{non-Archimedean Clarkson angle}  between $x$ and $y$ as 
\begin{align*}
	\alpha[x,y]\coloneqq \left\|\frac{x}{\|x\|}-\frac{y}{\|y\|}\right\|.
\end{align*}
Non-Archimedean version of Theorem \ref{DWMT} now reads as follows. 
\begin{theorem} (\textbf{Non-Archimedean Massera-Schaffer Inequality})
Let $\mathcal{X}$ be a NALS over $\mathbb{K}$. Then for all $x, y \in \mathcal{X}\setminus\{0\}$ with $\|x\|, \|y\| \in \mathbb{K}$ it holds
\begin{align*}
	\alpha[x,y]= \left\|\frac{x}{\|x\|}-\frac{y}{\|y\|}\right\|\leq \min \left\{\frac{1}{\big|\|x\|\big|}\max\left\{\|x-y\|, \left|1-\frac{\|x\|}{\|y\|}\right|\|y\|\right\}, \frac{1}{\big|\|y\|\big|}\max\left\{\|x-y\|, \left|1-\frac{\|y\|}{\|x\|}\right|\|x\|\right\}\right\}.
\end{align*}
\end{theorem}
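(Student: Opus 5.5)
The plan is to imitate Maligranda's argument: insert the auxiliary vector $y/\|x\|$ (respectively $x/\|y\|$), use the ultra-norm inequality in place of the triangle inequality, and then take the minimum. Because of the symmetry $x\leftrightarrow y$ (note $\alpha[x,y]=\alpha[y,x]$, since $\|-z\|=|-1|\|z\|=\|z\|$ and $|-1|=1$ follows from $|-1|^2=|1|=1$), it suffices to prove the first bound:
\begin{align*}
\left\|\frac{x}{\|x\|}-\frac{y}{\|y\|}\right\|\leq \frac{1}{\big|\|x\|\big|}\max\left\{\|x-y\|, \left|1-\frac{\|x\|}{\|y\|}\right|\|y\|\right\}.
\end{align*}
Here $\|x\|,\|y\|$ are regarded as elements of $\mathbb{K}$, and they are nonzero there because $x,y\neq 0$. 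Throughout, the scalars we divide by are elements of $\mathbb{K}$, and the norm scales by their valuation $|\cdot|$, not by the real number $\|x\|$. This distinction is the only subtle point, and it explains why the factor $1/\big|\|x\|\big|$ appears rather than $1/\|x\|$.

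The key decomposition is
\begin{align*}
\frac{x}{\|x\|}-\frac{y}{\|y\|}=\frac{1}{\|x\|}\left((x-y)+\Big(1-\frac{\|x\|}{\|y\|}\Big)y\right),
\end{align*}
which is a direct computation in the $\mathbb{K}$-vector space $\mathcal{X}$: the right-hand side equals $\frac{x}{\|x\|}-\frac{y}{\|x\|}+\frac{y}{\|x\|}-\frac{y}{\|y\|}$. Next apply homogeneity, $\|\lambda z\|=|\lambda|\|z\|$ with $\lambda=\|x\|^{-1}\in\mathbb{K}$, together with multiplicativity of the valuation, which gives $|\lambda|=1/\big|\|x\|\big|$. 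Then apply the ultra-norm inequality to the sum. Finally, use $\big\|(1-\|x\|/\|y\|)y\big\|=\big|1-\|x\|/\|y\|\big|\,\|y\|$, again by homogeneity.

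Interchanging the roles of $x$ and $y$ gives the second bound in terms of $\|y-x\|=\|x-y\|$ and $\big|1-\|y\|/\|x\|\big|\|x\|$. Taking the minimum of the two bounds yields the statement.

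I do not expect a genuine obstacle. The only step needing care is the bookkeeping that distinguishes the real number $\|x\|$ from the field element $\|x\|\in\mathbb{K}$ and its valuation $\big|\|x\|\big|$. In particular, one must not simplify $\big|1-\|x\|/\|y\|\big|$ using real-number arithmetic, since the valuation need not restrict to the usual absolute value on the copy of these numbers inside $\mathbb{K}$.
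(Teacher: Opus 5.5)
Your proposal is correct and follows essentially the paper's proof. It uses the same decomposition $\frac{x}{\|x\|}-\frac{y}{\|y\|}=\frac{x-y}{\|x\|}+\bigl(\frac{1}{\|x\|}-\frac{1}{\|y\|}\bigr)y$ (you simply factor out $\|x\|^{-1}$ first), then the ultra-norm inequality and homogeneity, and finally takes the minimum. The only cosmetic difference is that you obtain the second bound from the symmetry $\alpha[x,y]=\alpha[y,x]$ (via $|-1|=1$), whereas the paper redoes the computation with the decomposition $\bigl(\frac{1}{\|x\|}-\frac{1}{\|y\|}\bigr)x+\frac{x-y}{\|y\|}$.
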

\begin{proof}
	Let $x, y \in \mathcal{X}\setminus\{0\}$ with $\|x\|, \|y\| \in \mathbb{K}$. Then 
		\begin{align*}
	 \left\|\frac{x}{\|x\|}-\frac{y}{\|y\|}\right\|&= \left\|\frac{x-y}{\|x\|}+\left(\frac{1}{\|x\|}-\frac{1}{\|y\|}\right)y\right\|\\
	 &\leq \max \left\{\left\|\frac{x-y}{\|x\|}\right\|, \left\|\left(\frac{1}{\|x\|}-\frac{1}{\|y\|}\right)y\right\|\right\}\\
	  &\leq \max \left\{\frac{\|x-y\|}{\big|\|x\|\big|}, \left|\frac{1}{\|x\|}-\frac{1}{\|y\|}\right|\|y\|\right\}\\
	  &= \max \left\{\frac{\|x-y\|}{\big|\|x\|\big|}, \frac{1}{\left|\|x\|\right|}\left|1-\frac{\|x\|}{\|y\|}\right|\|y\|\right\}\\
	\end{align*}
and 
\begin{align*}
 \left\|\frac{x}{\|x\|}-\frac{y}{\|y\|}\right\|&= \left\|\left(\frac{1}{\|x\|}-\frac{1}{\|y\|}\right)x+\frac{x-y}{\|y\|}\right\|\\
&\leq \max \left\{\left\|\left(\frac{1}{\|x\|}-\frac{1}{\|y\|}\right)x\right\|, \left\|\frac{x-y}{\|y\|}\right\|\right\}\\
&\leq \max \left\{\left|\frac{1}{\|x\|}-\frac{1}{\|y\|}\right|\|x\|, \frac{\|x-y\|}{\big|\|y\|\big|}\right\}\\
&\leq \max \left\{\frac{1}{\big|\|y\|\big|}\left|1-\frac{\|y\|}{\|x\|}\right|\|x\|, \frac{\|x-y\|}{\big|\|y\|\big|}\right\}.
\end{align*}
Therefore 
\begin{align}\label{1}
 \left\|\frac{x}{\|x\|}-\frac{y}{\|y\|}\right\|\leq \frac{1}{\big|\|x\|\big|}\max\left\{\|x-y\|, \left|1-\frac{\|x\|}{\|y\|}\right|\|y\|\right\}
\end{align}
and 
\begin{align}\label{2}
	\left\|\frac{x}{\|x\|}-\frac{y}{\|y\|}\right\|\leq \frac{1}{\big|\|y\|\big|}\max\left\{\|x-y\|, \left|1-\frac{\|y\|}{\|x\|}\right|\|x\|\right\}.
\end{align}
Inequalities (\ref{1}) and  (\ref{2}) give 
\begin{align*}
	\left\|\frac{x}{\|x\|}-\frac{y}{\|y\|}\right\|&\leq\min \left\{\frac{1}{\big|\|x\|\big|}\max\left\{\|x-y\|, \left|1-\frac{\|x\|}{\|y\|}\right|\|y\|\right\}, \frac{1}{\big|\|y\|\big|}\max\left\{\|x-y\|, \left|1-\frac{\|y\|}{\|x\|}\right|\|x\|\right\}\right\}.
\end{align*}
\end{proof}
Note the additional assumption $\|x\|, \|y\| \in \mathbb{K}$ in the previous theorem. This is necessary because, since the norm is a real number, we generally cannot guarantee that it belongs to the given non-Archimedean field. We now derive the non-Archimedean version of Theorem \ref{DWMPRT}.
\begin{theorem} \label{NAMSMPRT} (\textbf{Non-Archimedean Massera-Schaffer-Maligranda-Pecaric-Rajic Inequality})
	Let $\mathcal{X}$ be a NALS over $\mathbb{K}$ and $n \in \mathbb{N}$. Then for all $x_1, \dots, x_n \in \mathcal{X}\setminus\{0\}$ with $\|x_1\|, \dots, \|x_n\| \in \mathbb{K}$ it holds
	\begin{align*}
		\left\|\sum_{j=1}^{n}\frac{x_j}{\|x_j\|}\right\|\leq \min_{1\leq k \leq n}\left\{\frac{1}{\big|\|x_k\|\big|} \max \left\{\left\|\sum_{j=1}^{n}x_j\right\|, \max_{1\leq j \leq n} \left|1-\frac{\|x_k\|}{\|x_j\|}\right|\|x_j\|\right\}\right\}.
	\end{align*}
\end{theorem}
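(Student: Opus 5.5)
Fix an index $k\in\{1,\dots,n\}$. It suffices to show
\begin{align*}
\left\|\sum_{j=1}^{n}\frac{x_j}{\|x_j\|}\right\|\leq \frac{1}{\big|\|x_k\|\big|} \max \left\{\left\|\sum_{j=1}^{n}x_j\right\|, \max_{1\leq j \leq n} \left|1-\frac{\|x_k\|}{\|x_j\|}\right|\|x_j\|\right\},
\end{align*}
since taking the minimum over $k$ then gives the theorem. The argument copies the proof of the non-Archimedean Massera-Schaffer inequality, with $x-y$ replaced by $\sum_j x_j$ and one correction term for each $j$.

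Everything rests on one algebraic identity. Since $\|x_k\|$ and each $\|x_j\|$ are nonzero elements of $\mathbb{K}$, we may write
\begin{align*}
\sum_{j=1}^{n}\frac{x_j}{\|x_j\|}=\frac{1}{\|x_k\|}\sum_{j=1}^{n}x_j+\sum_{j=1}^{n}\left(\frac{1}{\|x_j\|}-\frac{1}{\|x_k\|}\right)x_j .
\end{align*}
This is immediate: on the right, the coefficient of $x_j$ is $\frac{1}{\|x_k\|}+\frac{1}{\|x_j\|}-\frac{1}{\|x_k\|}=\frac{1}{\|x_j\|}$.

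Next we apply the ultra-norm inequality. By induction it extends from two summands to any finite number, so the norm of the right side is at most the maximum of the norms of its $n+1$ summands. Homogeneity gives
\begin{align*}
\left\|\frac{1}{\|x_k\|}\sum_{j=1}^{n}x_j\right\|=\frac{1}{\big|\|x_k\|\big|}\left\|\sum_{j=1}^{n}x_j\right\|.
\end{align*}
For each correction term we factor out $\frac{1}{\|x_k\|}$ using multiplicativity of the valuation:
\begin{align*}
\frac{1}{\|x_j\|}-\frac{1}{\|x_k\|}=\frac{1}{\|x_k\|}\left(\frac{\|x_k\|}{\|x_j\|}-1\right).
\end{align*}
We also use $|{-a}|=|a|$, which holds in any valued field because $|-1|^2=|1|=1$. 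Together these give
\begin{align*}
\left\|\left(\frac{1}{\|x_j\|}-\frac{1}{\|x_k\|}\right)x_j\right\|=\frac{1}{\big|\|x_k\|\big|}\left|1-\frac{\|x_k\|}{\|x_j\|}\right|\|x_j\|.
\end{align*}
Pulling the common factor $1/\big|\|x_k\|\big|$ out of the maximum yields the displayed bound for this $k$. The $j=k$ term vanishes, which causes no difficulty.

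There is no real obstacle. The only points needing care are to keep the two absolute values apart — the valuation of $\|x_k\|$ as an element of $\mathbb{K}$ versus $\|x_k\|$ as a real number — and to justify the finite ultra-norm inequality by induction. I would present this as a short chain of (in)equalities, as in the proof of the $n=2$ case.
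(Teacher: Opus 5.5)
Your proof is correct and follows the paper's argument. The paper also fixes $k$ and splits $\sum_j x_j/\|x_j\|$ into $\|x_k\|^{-1}\sum_j x_j$ plus a correction term $\big(\|x_j\|^{-1}-\|x_k\|^{-1}\big)x_j$ for each $j$ (it writes these with the opposite sign), then applies the ultra-norm inequality, factors out $1/\big|\|x_k\|\big|$, and minimizes over $k$. Your explicit remarks on the finite ultra-norm inequality and on $|-1|=1$ only spell out steps the paper leaves implicit.
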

\begin{proof}
Let $x_1, \dots, x_n \in \mathcal{X}\setminus\{0\}$ with $\|x_1\|, \dots, \|x_n\| \in \mathbb{K}$. Let $1\leq k \leq n$ be fixed. Then 
\begin{align*}
	\left\|\sum_{j=1}^{n}\frac{x_j}{\|x_j\|}\right\|&=\left\|\frac{x_k}{\|x_k\|}+\sum_{j=1, j \neq k}^{n}\frac{x_j}{\|x_j\|}\right\|\\
	&=\left\|\sum_{j=1}^{n}\frac{x_j}{\|x_k\|}-\sum_{j=1, j \neq k}^{n}\frac{x_j}{\|x_k\|}+\sum_{j=1, j \neq k}^{n}\frac{x_j}{\|x_j\|}\right\|\\
	&=\left\|\sum_{j=1}^{n}\frac{x_j}{\|x_k\|}-\sum_{j=1, j \neq k}^{n}\left(\frac{1}{\|x_k\|}-\frac{1}{\|x_j\|}\right)x_j\right\|\\
	&=\left\|\sum_{j=1}^{n}\frac{x_j}{\|x_k\|}-\sum_{j=1}^{n}\left(\frac{1}{\|x_k\|}-\frac{1}{\|x_j\|}\right)x_j\right\|\\
	&\leq \max \left\{\left\|\sum_{j=1}^{n}\frac{x_j}{\|x_k\|}\right\|, \left\|\sum_{j=1}^{n}\left(\frac{1}{\|x_k\|}-\frac{1}{\|x_j\|}\right)x_j\right\|\right\}\\
	&=\max \left\{\frac{1}{\big|\|x_k\|\big|}\left\|\sum_{j=1}^{n}x_j\right\|, \left\|\sum_{j=1}^{n}\left(\frac{1}{\|x_k\|}-\frac{1}{\|x_j\|}\right)x_j\right\|\right\}\\
	&\leq \max \left\{\frac{1}{\big|\|x_k\|\big|}\left\|\sum_{j=1}^{n}x_j\right\|, \max_{1\leq j \leq n} \left\|\left(\frac{1}{\|x_k\|}-\frac{1}{\|x_j\|}\right)x_j\right\|\right\}\\
	&= \max \left\{\frac{1}{\big|\|x_k\|\big|}\left\|\sum_{j=1}^{n}x_j\right\|, \frac{1}{\big|\|x_k\|\big|}\max_{1\leq j \leq n} \left|1-\frac{\|x_k\|}{\|x_j\|}\right|\|x_j\|\right\}\\
	&=\frac{1}{\big|\|x_k\|\big|} \max \left\{\left\|\sum_{j=1}^{n}x_j\right\|, \max_{1\leq j \leq n} \left|1-\frac{\|x_k\|}{\|x_j\|}\right|\|x_j\|\right\}.
\end{align*}

By varying $k$ and taking minimum in the right side of previous inequality gives 
\begin{align*}
		\left\|\sum_{j=1}^{n}\frac{x_j}{\|x_j\|}\right\|\leq \min_{1\leq k \leq n}\left\{\frac{1}{\big|\|x_k\|\big|} \max \left\{\left\|\sum_{j=1}^{n}x_j\right\|, \max_{1\leq j \leq n} \left|1-\frac{\|x_k\|}{\|x_j\|}\right|\|x_j\|\right\}\right\}.
\end{align*}
\end{proof}
\section{Conclusions}
\begin{enumerate}
	\item In 1958, Massera and Schaffer derived a surprising upper bound for the Clarkson angle between two nonzero elements in a normed linear space \cite{MASSERASCHAFFER}.
	\item In 2006, Maligranda improved Massera-Schaffer inequality \cite{MALIGRANDA}.
	\item In 2007, Pecaric and Rajic extended Maligranda inequality for finitely many nonzero elements \cite{PECARICRAJIC}. 
	\item In this article, we introduced the notion of non-Archimedean Clarkson angle and derived   non-Archimedean version of Massera-Schaffer-Maligranda-Pecaric-Rajic inequality. 
\end{enumerate}

 \bibliographystyle{plain}
 \bibliography{reference.bib}

\begin{thebibliography}{1}

\bibitem{CLARKSON}
James~A. Clarkson.
\newblock Uniformly convex spaces.
\newblock {\em Trans. Am. Math. Soc.}, 40:396--414, 1936.

\bibitem{DUNKLWILLIAMS}
Charles~F. Dunkl and K.~S. Williams.
\newblock A simple norm inequality.
\newblock {\em Am. Math. Mon.}, 71:53--54, 1964.

\bibitem{MALIGRANDA}
Lech Maligranda.
\newblock Simple norm inequalities.
\newblock {\em Am. Math. Mon.}, 113(3):256--260, 2006.

\bibitem{MASSERASCHAFFER}
J.~L. Massera and J.~J. Sch{\"a}ffer.
\newblock Linear differential equations and functional analysis. {I}.
\newblock {\em Ann. Math. (2)}, 67:517--573, 1958.

\bibitem{MERCER}
Peter~R. Mercer.
\newblock A refined {C}auchy-{S}chwarz inequality.
\newblock {\em International Journal of Mathematical Education in Science and
  Technology}, 38(6):839--843, 2007.

\bibitem{PECARICRAJIC}
Josip Pe{\v{c}}ari{\'c} and Rajna Raji{\'c}.
\newblock The {Dunkl}-{Williams} inequality with {{\(n\)}} elements in normed
  linear spaces.
\newblock {\em Math. Inequal. Appl.}, 10(2):461--470, 2007.

\bibitem{GARCIASCHIKHOF}
C.~Perez-Garcia and W.~H. Schikhof.
\newblock {\em Locally convex spaces over non-{Archimedean} valued fields},
  volume 119 of {\em Camb. Stud. Adv. Math.}
\newblock Cambridge: Cambridge University Press, 2010.

\bibitem{SCHIKHOF}
W.~H. Schikhof.
\newblock {\em Ultrametric calculus. {An} introduction to $p$-adic analysis},
  volume~4 of {\em Camb. Stud. Adv. Math.}
\newblock Cambridge: Cambridge University Press, 2006.

\end{thebibliography}

\end{document}